\newtheorem{theorem}{Theorem}
\newtheorem{lemma}[theorem]{Lemma}
\newtheorem*{lemma*}{Lemma}
\newtheorem{corollary}[theorem]{Corollary}
\newtheorem*{theorem*}{Theorem}
\theoremstyle{definition}
\newtheorem{definition}[theorem]{Definition}
\newtheorem{fact}[theorem]{Fact}
\newtheorem{remark}[theorem]{Remark}
\newtheorem*{remark*}{Remark}
\newcommand{\all}{\hbox{for all}}
\newcommand{\bra}[2]{\langle#1,#2\rangle}
\newcommand{\Bra}[2]{\big\langle#1,#2\big\rangle}
\newcommand{\B}{{\cal B}}
\newcommand{\dand}{\hbox{ and }}
\newcommand{\half}{{\textstyle\frac{1}{2}}}
\newcommand{\I}{\mathbb I}
\newcommand{\qlr}{\quad\Longrightarrow\quad}
\newcommand{\quand}{\quad\hbox{and}\quad}
\newcommand{\rbar}{\,]{-}\infty,\infty]}
\newcommand{\RR}{\mathbb R}
\newcommand{\toto}{\rightrightarrows}
\newcommand{\Cor}{Corollary~\ref}
\newcommand{\Def}{Definition~\ref}
\newcommand{\Fact}{Fact~\ref}
\newcommand{\Lem}{Lemma~\ref}
\newcommand{\Sec}{Section~\ref}
\newcommand{\Thm}{Theorem~\ref}
\title{Touching multifunctions on a Hilbert space}
\author{
Stephen Simons
\thanks{
Department of Mathematics, University of California, Santa Barbara, CA\ 93106-3080, U.S.A.
Email: \texttt{stesim38@gmail.com}.}}
\date{}
\begin{document}
\maketitle
\begin{abstract}
\noindent
We introduce the concept of the {\em touching} of two multifunctions on a real Hilbert space, and deduce that certain multifunctions on the space have a unique fixed point.   These result are applied to the theory of {\em genaralized cycles} and {\em generalized gap vectors} for the composition of the projections onto a finite number of closed convex space in a real Hilbert space.
\end{abstract}

{\small \noindent {\bfseries 2020 Mathematics Subject Classification:}
{Primary 46C05; Secondary 46C07, 47H05, 47H10.}}

\noindent {\bfseries Keywords:} Hilbert space, maximally monotone operator, Minty's theorem, subdifferential.

%\pagestyle{myheadings}\markright{\rm\jobname, run on \today\ at \currenttime}
%\baselineskip20pt
% \Sec{Introduction}
\section{Introduction}\label{Introduction}
The analysis in this paper was originally motivated by \cite{ABRW}, in which the\break {\em geometry conjecture} (originally formulated in 1977, about the fixed point set of the composition of projections onto a finite number of nonempty closed convex subsets) was resolved.   The initial part of \cite{ABRW} contained some technical Hilbert space results, and the later part of \cite{ABRW} discussed some special cases and contained results on numerical computation.  \cite{mth} contained a streamlining of the Hilbert space results in \cite{ABRW}.   The techniques introduced in \cite{mth} were used in \cite{BW} to obtain further results on (classical and phantom) cycles and gap vectors.   This paper contains Hilbert space results that extend the main results in \cite{BW}.
\par
Here is a brief plan of this paper.   The analysis in \Sec{Nonlinear} is about multifunctions on a real Hilbert space, which we will denote by $Y$.   In \Def{touching}, we define the concepts of {\em touching multifunctions}, and give results in \Thm{MQthm} and \Cor{Q} on certain multifunctions that touch every maximally monotone multifunction.   \Cor{Q} leads rapidly to  \Lem{fpt} and \Cor{BW3.1}.   \Lem{fpt} is couched in terms of the {\em fixed points} of certain multifunctions on $Y$.  \Cor{BW3.1} is a restatement of \cite[Lemma 3.1]{BW} in the notation of this paper.      
\par
In \Sec{YX}, $Y$ is a closed subspace of a Hilbert space $X$. \Thm{BWTh4.10} is a (not altogether immediate) consequence of \Lem{fpt}. \Thm{BWTh4.10}(b) is a restatement of \cite[Theorem 4.10]{BW} in the notation of this paper and is generalized in \Thm{BWTh4.10}(a).        
%
% \Sec{Nonlinear}
\section{Very unmonotone multifunctions}\label{Nonlinear}
If $M$ is a multifunction, we write $G(M)$ for the {\em graph} of $M$.
%\Def{unmonotone}
\begin{definition}\label{unmonotone}
Let $\mu > 0$ and $Q\colon\ Y \toto Y$.   We say that $Q$ is {\em $\mu$--unmonotone} if
%\eqref{Very}
\begin{equation*}\label{Very}
(y_1,q_1),(y_2,q_2) \in G(Q) \qlr \bra{y_1 - y_2}{q_1-q_2} + \mu\|(y_1 - y_2,q_1 - q_2)\|^2 \le 0.
\end{equation*}
\end{definition}
We write $\B(Y)$ for the set of all {\em bounded linear operators} from $Y$ into $Y$, and  $\I_Y \in \B(Y)$ for the {\em identity map} on $Y$.
%\Lem{C-}
\begin{lemma}\label{C-}
Let $\mu > 0$ and $Q\colon\ Y \toto Y$ be $\mu$--unmonotone.   Then $-Q - \mu \I_Y$ is monotone.
\end{lemma}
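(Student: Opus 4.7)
The plan is to unfold both definitions and do a direct algebraic manipulation. The $\mu$-unmonotonicity hypothesis uses the product Hilbert space norm, which I would interpret as $\|(y_1-y_2,q_1-q_2)\|^2 = \|y_1-y_2\|^2 + \|q_1-q_2\|^2$. So the hypothesis reads
\[
\bra{y_1-y_2}{q_1-q_2} + \mu\|y_1-y_2\|^2 + \mu\|q_1-q_2\|^2 \le 0
\]
for all $(y_1,q_1),(y_2,q_2) \in G(Q)$.

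Next I would pick two arbitrary points in $G(-Q - \mu\I_Y)$; by definition these have the form $(y_i, -q_i - \mu y_i)$ for $i=1,2$ with $(y_i,q_i) \in G(Q)$. To verify monotonicity of $-Q-\mu\I_Y$, I need to show
\[
\Bra{y_1-y_2}{(-q_1 - \mu y_1) - (-q_2 - \mu y_2)} \ge 0.
\]
Expanding, this inner product equals $-\bra{y_1-y_2}{q_1-q_2} - \mu\|y_1-y_2\|^2$. Now I would substitute the $\mu$-unmonotonicity inequality, which gives $-\bra{y_1-y_2}{q_1-q_2} \ge \mu\|y_1-y_2\|^2 + \mu\|q_1-q_2\|^2$. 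Combining, the quantity of interest is at least $\mu\|q_1-q_2\|^2 \ge 0$, which is the desired monotonicity (in fact a little more, since one even obtains $\mu\|q_1-q_2\|^2$ as a lower bound).

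There is essentially no obstacle here: the lemma is a one-line algebraic consequence of the definition once the product-norm convention is made explicit. The only point worth being careful about is that $-Q - \mu\I_Y$ is a sum of a multifunction and a single-valued linear map, so its graph is exactly $\{(y,-q-\mu y) : (y,q) \in G(Q)\}$, which is what justifies the parametrization used above.
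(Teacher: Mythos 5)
Your proof is correct and follows essentially the same route as the paper: expand the inner product for $-Q-\mu\I_Y$ and apply the $\mu$--unmonotonicity inequality. The only cosmetic difference is that you spell out the product norm as $\|(a,b)\|^2=\|a\|^2+\|b\|^2$ and thereby get the slightly stronger lower bound $\mu\|q_1-q_2\|^2$, whereas the paper only uses $\|y_1-y_2\|^2\le\|(y_1-y_2,q_1-q_2)\|^2$.
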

\begin{proof}
For all $(y_1,q_1),(y_2,q_2) \in G(Q)$,
\begin{align*}
\bra{y_1 - y_2}{(-q_1 - \mu y_1) &- (-q_2 - \mu y_2)}
= \bra{y_1 - y_2}{-(q_1 - q_2) - \mu(y_1 - y_2)}\\
&= -\bra{y_1 - y_2}{q_1 - q_2} - \mu\|y_1 - y_2\|^2\\
&\ge -\bra{y_1 - y_2}{q_1 - q_2} - \mu\|(y_1 - y_2,q_1 - q_2)\|^2 \ge 0.
\end{align*}
This completes the proof of \Lem{C-}.   
\end{proof}
\Thm{MQthm} depends on two results from the theory of maximally monotone multifunctions, \Fact{SUMS} and \Fact{MintyThm}.   \Fact{SUMS}, follows from the {\em sum theorem},\break \cite[Corollary 24.4(i), p.\ 353]{BC}.
\par
If $M$ is a multifunction, we write $D(M)$ for the {\em domain} of $M$.
% \Fact{SUMS}
\begin{fact}\label{SUMS}
{\em Let $M_1$ and  $M_2$ be maximally monotone multifunctions on $Y$ and $D(M_1) = Y$.  Then $M_1 + M_2$ is maximally monotone.}

\end{fact}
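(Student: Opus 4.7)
The plan is to invoke Minty's theorem (cf.\ \Fact{MintyThm}), which characterizes the maximal monotonicity of a monotone multifunction $M$ on the Hilbert space $Y$ by the surjectivity of $\I_Y + M$. Since $M_1 + M_2$ is manifestly monotone, it suffices, given $z \in Y$, to produce $y \in D(M_2)$ with $z \in y + M_1 y + M_2 y$. Note that the hypothesis $D(M_1) = Y$ will be crucial for obtaining a priori bounds later.

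To construct such a $y$, I would regularize $M_2$ via its Yosida approximation: for each $\lambda > 0$, set $J_\lambda := (\I_Y + \lambda M_2)^{-1}$, which is single-valued and firmly nonexpansive on $Y$ by the maximal monotonicity of $M_2$, and $M_2^\lambda := \lambda^{-1}(\I_Y - J_\lambda)$, which is Lipschitz, everywhere-defined, and maximally monotone. The sum $M_1 + M_2^\lambda$ is then a much easier instance of the sum theorem (a Lipschitz, full-domain perturbation of a maximally monotone operator), so Minty provides $y_\lambda \in Y$ and $u_\lambda \in M_1 y_\lambda$ satisfying $z = y_\lambda + u_\lambda + M_2^\lambda y_\lambda$. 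Writing $v_\lambda := J_\lambda y_\lambda$ and $w_\lambda := M_2^\lambda y_\lambda$, we have $w_\lambda \in M_2 v_\lambda$, $y_\lambda - v_\lambda = \lambda w_\lambda$, and $z = y_\lambda + u_\lambda + w_\lambda$.

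A priori bounds follow by pairing the identity $z - y_\lambda = u_\lambda + w_\lambda$ against $y_\lambda - y_0$ for a fixed reference point $y_0 \in D(M_2)$ with chosen $u_0 \in M_1 y_0$ and $w_0 \in M_2 y_0$, and invoking the monotonicity inequalities of $M_1$ at $(y_\lambda, y_0)$ and of $M_2$ at $(v_\lambda, y_0)$; this yields uniform bounds on $\|y_\lambda\|$, $\|u_\lambda\|$ and $\|w_\lambda\|$ as $\lambda \downarrow 0$, and in particular $\|y_\lambda - v_\lambda\| = \lambda\|w_\lambda\| \to 0$. Extracting a subsequence, $y_\lambda, v_\lambda \rightharpoonup y$, $u_\lambda \rightharpoonup u$, and $w_\lambda \rightharpoonup w$ weakly.

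The main obstacle is passing to the limit in the inclusions $u_\lambda \in M_1 y_\lambda$ and $w_\lambda \in M_2 v_\lambda$, since graphs of maximally monotone operators are not weakly sequentially closed in general. The standard remedy (the Brezis--Crandall--Pazy $\limsup$ trick) leverages the identity $z - y_\lambda = u_\lambda + w_\lambda$ to control $\limsup \bra{y_\lambda}{u_\lambda + w_\lambda}$ by $\bra{y}{u + w}$; combined with the pairwise monotonicity inequalities between $(y_\lambda, u_\lambda)$ and arbitrary $(\eta, \eta^*) \in G(M_1)$, and between $(v_\lambda, w_\lambda)$ and arbitrary $(\eta, \eta^*) \in G(M_2)$, maximality of each $M_i$ forces $u \in M_1 y$ and $w \in M_2 y$. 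Then $z = y + u + w \in y + M_1 y + M_2 y$, confirming the Minty surjectivity condition for $M_1 + M_2$.
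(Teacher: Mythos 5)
The paper does not actually prove this statement: it records it as a Fact and refers to \cite[Corollary 24.4(i)]{BC}, so any self-contained argument is by default a different route. What you sketch is essentially the classical Rockafellar/Brezis--Crandall--Pazy proof of the sum theorem via Yosida approximation, and its overall architecture is sound: reduce to Minty's surjectivity criterion, solve the regularized inclusion $z \in y_\lambda + M_1 y_\lambda + M_2^\lambda y_\lambda$, obtain uniform bounds, and pass to the limit. One presentational caveat: the maximal monotonicity of $M_1 + M_2^\lambda$ must rest on an independently proved lemma (a maximally monotone operator plus a monotone Lipschitz everywhere-defined operator is maximally monotone, provable by a contraction argument on resolvents); as written, the appeal to ``a much easier instance of the sum theorem'' reads as circular unless that lemma is named.

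The genuine gap is in the a priori bounds. Pairing $z - y_\lambda = u_\lambda + w_\lambda$ against $y_\lambda - y_0$ and using the monotonicity inequalities at the \emph{single} reference point $(y_0,u_0) \in G(M_1)$, $(y_0,w_0) \in G(M_2)$ bounds $\|y_\lambda\|$ and $\lambda\|w_\lambda\|^2$, hence the sum $u_\lambda + w_\lambda = z - y_\lambda$, but it does not separate $u_\lambda$ from $w_\lambda$: nothing so far prevents them from blowing up in opposite directions. Nor does boundedness of $\|y_\lambda\|$ rescue you by itself, because in infinite dimensions a maximally monotone operator with $D(M_1) = Y$ need not be bounded on bounded sets (take $\partial g$ for a continuous convex $g$ on $\ell^2$ that is unbounded on the unit ball). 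The correct point of entry of the hypothesis $D(M_1) = Y$ is Rockafellar's \emph{local} boundedness theorem: $M_1$ is locally bounded at $y_0 \in D(M_2) \subset \intr D(M_1)$, so there are $\rho, C > 0$ with $\|v - y_0\| \le \rho \Rightarrow \supn\{\|u\| : u \in M_1v\} \le C$; testing the monotonicity of $M_1$ against all such pairs $(v,u)$ gives $\rho\|u_\lambda\| \le \bra{u_\lambda}{y_\lambda - y_0} + O(1)$, and the right-hand side is then controlled by substituting $u_\lambda = z - y_\lambda - w_\lambda$ and using the monotonicity of $M_2$ once more. With that step inserted, $\|u_\lambda\|$ and $\|w_\lambda\|$ are bounded and the rest of your sketch closes the proof --- indeed the interpolated monotonicity inequalities then show $\|y_\lambda - y_\mu\|^2 \le 2(\lambda+\mu)\supn_\nu\|w_\nu\|^2$, so $(y_\lambda)$ converges \emph{strongly} and the limit passage is a simple strong--weak argument rather than the full $\limsup$ trick.
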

\Fact{MintyThm}, is {\em Minty's theorem}, \cite{Minty}, or \cite[Theorem 21.1, pp.\ 311]{BC}.
% \Fact{MintyThm}
\begin{fact}\label{MintyThm}
{\em Let $N$ be a maximally monotone multifunction on $Y$ and $\mu > 0$. Then there exists $y \in Y$ such that $0 \in \mu y + Ny$.}
\end{fact}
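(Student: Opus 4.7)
My plan is to prove the stronger surjectivity statement $R(\mu\I_Y+N)=Y$ and then specialize to the origin. Two properties of $\mu\I_Y+N$ will drive the argument: strong monotonicity, which is immediate from the monotonicity of $N$, and fullness of range, which requires maximality in an essential way.

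First I would establish closedness of $R(\mu\I_Y+N)$. For $(y_i,z_i)\in G(N)$ the chain
\[
\mu\|y_1-y_2\|^2\le\bra{y_1-y_2}{(\mu y_1+z_1)-(\mu y_2+z_2)}\le\|y_1-y_2\|\,\|(\mu y_1+z_1)-(\mu y_2+z_2)\|
\]
shows $(\mu\I_Y+N)^{-1}$ is single-valued and $1/\mu$-Lipschitz on its domain. A Cauchy sequence $w_n=\mu y_n+z_n$ in $R(\mu\I_Y+N)$ therefore has Cauchy preimage $y_n\to y$, and $z_n=w_n-\mu y_n$ converges to some $z$; the closure of $G(N)$ under strong convergence of both coordinates (a standard consequence of maximality) places $(y,z)\in G(N)$, so $\lim w_n\in R(\mu\I_Y+N)$.

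For the full-range step I would reach for the Fitzpatrick function
\[
\varphi_N(y,y^*):=\sup\bigl\{\bra{y}{v^*}+\bra{v}{y^*}-\bra{v}{v^*}:(v,v^*)\in G(N)\bigr\},
\]
which is proper, lsc, convex on $Y\times Y$ and satisfies $\varphi_N(y,y^*)\ge\bra{y}{y^*}$, with equality characterizing $G(N)$ precisely because $N$ is maximally monotone. For given $w\in Y$ the functional
\[
g(y):=\varphi_N(y,w-\mu y)-\bra{y}{w-\mu y}=\varphi_N(y,w-\mu y)+\mu\|y\|^2-\bra{y}{w}
\]
is proper, lsc, convex, and nonnegative, with zero set exactly $\{y:w-\mu y\in Ny\}$. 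Evaluating the sup defining $\varphi_N$ at a single base point $(v_0,v_0^*)\in G(N)$ gives an affine lower bound that, combined with the $\mu\|y\|^2$ term, makes $g$ coercive, so a minimizer $y_*$ exists by weak lsc.

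The main obstacle will be to argue $g(y_*)=0$, equivalently $(y_*,w-\mu y_*)\in G(N)$. This is where maximality does the nontrivial work: the first-order condition $0\in\partial g(y_*)$, obtained by subdifferential calculus through the affine chain $y\mapsto(y,w-\mu y)$, must be combined with the autodual structure of $\varphi_N$ at points of $G(N)$ (that $(y^*,y)\in\partial\varphi_N(y,y^*)$ whenever $\varphi_N(y,y^*)=\bra{y}{y^*}$, and that the converse subdifferential inclusion pins the pair into $G(N)$) to force the sought inclusion. Specializing $w=0$ then yields the theorem. A fallback if the Fitzpatrick accounting proves delicate is the classical Rockafellar strategy: show $R(\mu\I_Y+N)$ is open by a Banach-fixed-point perturbation around a known solution, and conclude via connectedness.
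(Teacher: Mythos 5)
The paper does not prove this statement: it is quoted as a Fact (Minty's theorem) with citations to \cite{Minty} and \cite[Theorem 21.1]{BC}, so there is no internal proof to compare against. Judged on its own terms, your sketch follows the known Fitzpatrick--function route to Minty's theorem, and several of its ingredients are sound: the Lipschitz estimate for $(\mu\I_Y+N)^{-1}$, the closedness of $G(N)$ under strong convergence, the inequality $\varphi_N\ge\bra{\cdot}{\cdot}$ with equality characterizing $G(N)$, and the fact that $\varphi_N^*(y^*,y)\ge\varphi_N(y,y^*)$ (restrict the supremum defining $\varphi_N^*$ to $G(N)$), which is what makes the final ``pinning into $G(N)$'' step work.

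The genuine gap is in the decision to restrict to the affine set $L:=\{(y,w-\mu y):y\in Y\}$ before minimizing. First, properness of $g$ requires $L\cap\dom\varphi_N\ne\emptyset$, which you do not justify; $\dom\varphi_N$ can be very thin (e.g.\ for $N=\partial\iota_{\{0\}}$ it is $\{0\}\times Y$), and showing that a ``negative--slope'' affine set must meet it is uncomfortably close to the theorem itself. Second, and more seriously, the first--order condition you invoke, $0\in\partial g(y_*)$ decomposed through the affine chain, needs the subdifferential chain rule $\partial(\varphi_N\circ A)=A^*\partial\varphi_N(A\cdot)$, which is valid only under a constraint qualification on $\dom\varphi_N$ relative to $L$; without it, $0\in\partial g(y_*)$ merely restates that $y_*$ is a minimizer and yields nothing. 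Both defects disappear if you run the same argument on all of $Y\times Y$: minimize $h(y,y^*):=\varphi_N(y,y^*)+\tfrac{\mu}{2}\|y\|^2+\tfrac{1}{2\mu}\|y^*\|^2\ge\tfrac{1}{2\mu}\|y^*+\mu y\|^2\ge0$, note that the quadratic is continuous everywhere so the sum rule applies unconditionally, and at a minimizer $(y_0,y_0^*)$ the condition $(-\mu y_0,-y_0^*/\mu)\in\partial\varphi_N(y_0,y_0^*)$ combined with $\varphi_N^*(a,b)\ge\bra{b}{a}$ forces $\|y_0^*+\mu y_0\|^2\le0$ and $\varphi_N(y_0,y_0^*)=\bra{y_0}{y_0^*}$, i.e.\ $-\mu y_0\in Ny_0$. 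With that replacement your argument closes; the closed--range paragraph and the Rockafellar fallback then become unnecessary.
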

%\Def{touching}
\begin{definition}\label{touching}
Let $M,Q\colon\ Y \toto Y$.   We say that $M$ and $Q$ {\em touch} if $G(M) \cap G(Q)$ is a singleton in $Y \times Y$.
\end{definition}
%\Thm{MQthm}
\begin{theorem}\label{MQthm}
Let $\mu > 0$, $Q\colon\ Y \toto Y$ be $\mu$--unmonotone and $D(Q) = Y$.   Suppose also that $-Q - \mu \I_Y$ is {\em maximally} monotone.  Then $Q$ touches every maximally monotone multifunction on $Y$.
\end{theorem}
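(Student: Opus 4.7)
The plan is to handle existence and uniqueness of a point in $G(M) \cap G(Q)$ separately: existence via Minty's theorem applied to an auxiliary maximally monotone operator, and uniqueness directly from the definitions of monotonicity and $\mu$--unmonotonicity.

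For existence, fix a maximally monotone $M\colon Y \toto Y$. I want to produce $y \in Y$ and $q \in Y$ with $q \in My \cap Qy$, i.e., $0 \in My - Qy$. The natural reformulation is to shift by $\mu y$: set $N := M + (-Q - \mu \I_Y)$. Since $M$ is maximally monotone and, by hypothesis, $-Q - \mu \I_Y$ is maximally monotone with $D(-Q - \mu \I_Y) = D(Q) = Y$, \Fact{SUMS} implies that $N$ is maximally monotone. Then \Fact{MintyThm} yields $y \in Y$ with $0 \in \mu y + Ny$. Unwinding, this means $-\mu y \in My - Qy - \mu y$, so $0 \in My - Qy$, giving $(y,q) \in G(M) \cap G(Q)$ for some $q \in Y$.

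For uniqueness, suppose $(y_1,q_1),(y_2,q_2) \in G(M) \cap G(Q)$. Monotonicity of $M$ gives $\bra{y_1 - y_2}{q_1 - q_2} \ge 0$, while $\mu$--unmonotonicity of $Q$ gives
\begin{equation*}
\bra{y_1 - y_2}{q_1 - q_2} + \mu\|(y_1 - y_2, q_1 - q_2)\|^2 \le 0.
\end{equation*}
Adding these inequalities forces $\mu\|(y_1 - y_2, q_1 - q_2)\|^2 \le 0$, so $(y_1,q_1) = (y_2,q_2)$.

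There is no serious obstacle here: the one nontrivial step is spotting the auxiliary operator $N = M + (-Q - \mu \I_Y)$ that packages the hypotheses so that \Fact{SUMS} applies (this is where full domain of $-Q - \mu\I_Y$ matters) and \Fact{MintyThm} then immediately delivers the desired point. The $\mu$--unmonotonicity hypothesis plays a double role: indirectly through \Lem{C-} in giving monotonicity of $-Q - \mu \I_Y$ (which the hypothesis upgrades to \emph{maximal} monotonicity), and directly in the uniqueness argument.
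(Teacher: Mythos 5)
Your proof is correct and follows essentially the same route as the paper: uniqueness from adding the monotonicity and $\mu$--unmonotonicity inequalities, and existence by applying \Fact{SUMS} to $M - Q - \mu\I_Y$ (your $N$) and then \Fact{MintyThm}, with the $\mu y$ terms cancelling to give $0 \in (M-Q)y$. No issues.
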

\begin{proof}
Let $M\colon\ Y \toto Y$ be maximally monotone.   We start off by proving that $G(M) \cap G(Q)$ contains {\em at most one} element of $Y \times Y$.   To this end, let $(y_1,q_1),(y_2,q_2) \in G(M) \cap G(Q)$.   Since $(y_1,q_1),(y_2,q_2) \in G(M)$,
%\eqref{M}
\begin{equation}\label{M}
\bra{y_1 - y_2}{q_1 - q_2} \ge 0.
\end{equation}
On the other hand, since $(y_1,q_1),(y_2,q_2) \in G(Q)$, from \Def{unmonotone} and \eqref{M},
\begin{equation*}
\mu\|(y_1 - y_2,q_1 - q_2)\|^2 \le -\bra{y_1 - y_2}{q_1 - q_2} \le 0.
\end{equation*}
and so $(y_1,q_1) = (y_2,q_2)$.   Thus $G(M) \cap G(Q)$ contains at most one point.
\par
On the other hand, $M$ and $-Q - \mu \I_Y$ are both maximally monotone and $D(-Q - \mu \I_Y) = Y$.   From \Fact{SUMS}, $M -Q - \mu \I_Y$ is maximally monotone, and \Fact{MintyThm} provides $y \in Y$ such that $0 \in \mu y + (M -Q - \mu \I_Y)y$, that is to say, $0 \in (M - Q)y$.   It follows easily from this that $G(M) \cap G(Q) \ne \emptyset$, which completes the proof of \Thm{MQthm}.    
\end{proof}
We conclude this section with applications to bounded linear operators.   Our analysis depends on the following result about monotone functions.   See\break \cite[Corollary 20.25, p.\ 298]{BC}
%\Fact{single}
\begin{fact}\label{single}
{\em Any continuous (single--valued) monotone function from $Y$ into $Y$ is maximally monotone.}
\end{fact}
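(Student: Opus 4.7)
The plan is to verify maximality of a continuous monotone single-valued $T\colon Y\to Y$ directly from the definition, using the classical ``line test'' that goes back to Minty. So I would take a candidate extension point $(y_0,v_0)\in Y\times Y$ and assume that it is monotonically related to the graph of $T$, meaning $\bra{y-y_0}{Ty-v_0} \ge 0$ for every $y\in Y$. The goal is to conclude $v_0=Ty_0$, which says that $G(T)\cup\{(y_0,v_0)\}$ cannot strictly enlarge $G(T)$, i.e., $T$ is maximally monotone.

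The first step is to substitute the affine probe $y=y_0+tw$ for an arbitrary $w\in Y$ and an arbitrary $t>0$ into the monotonicity inequality. Using bilinearity of the inner product and dividing through by $t$, this yields
\begin{equation*}
\bra{w}{T(y_0+tw)-v_0}\ge 0 \quad\text{for all } t>0,\ w\in Y.
\end{equation*}
The second step is the only place where the hypothesis of continuity is used: letting $t\downarrow 0$ and invoking continuity of $T$ at $y_0$, the continuity of the inner product passes the limit inside to give $\bra{w}{Ty_0-v_0}\ge 0$ for every $w\in Y$.

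The third step is the elementary observation that $Y$ is a linear space, so replacing $w$ by $-w$ yields the reverse inequality and hence $\bra{w}{Ty_0-v_0}=0$ for every $w\in Y$; taking $w=Ty_0-v_0$ forces $v_0=Ty_0$, which is what was wanted. There is really no main obstacle here: the argument is one-line once the affine probe is set up, and the only ingredient beyond bilinearity is continuity of $T$, which is exactly the assumption. The only subtlety worth flagging is that the argument works verbatim in any normed space, so the Hilbert structure of $Y$ is not needed for this particular result; it is used elsewhere in the paper.
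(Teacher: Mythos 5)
Your argument is correct and complete. It is the classical Minty ``line test'': probing the monotone relation with $y=y_0+tw$, dividing by $t>0$, letting $t\downarrow 0$ using continuity of $T$ at $y_0$, and then replacing $w$ by $-w$ forces $\bra{w}{Ty_0-v_0}=0$ for all $w$, hence $v_0=Ty_0$, which is exactly maximality for a single-valued operator whose domain is all of $Y$. There is no internal proof in the paper to compare against: the statement is recorded as a Fact and attributed to \cite[Corollary 20.25, p.\ 298]{BC}, so you have supplied a self-contained proof of a result the paper imports from the literature. Your proof is essentially the standard one given in that reference. Two of your side remarks are also accurate and worth keeping: only hemicontinuity of $t\mapsto T(y_0+tw)$ at $t=0$ is actually used (full norm continuity is more than needed), and the argument carries over to monotone operators from a normed space into its dual once the inner product is read as the duality pairing, so the Hilbert structure plays no role in this particular fact.
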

%\Cor{Q}
\begin{corollary}\label{Q}
Let $Q \in {\B}(Y)$, $\lambda > 0$ and,
%\eqref{le}
\begin{equation}\label{le}
\all\ y \in Y,\quad \bra{y}{Qy} + \lambda\|y\|^2 \le 0.
\end{equation}
Then $Q$ touches every maximally monotone multifunction on $Y$. 
\end{corollary}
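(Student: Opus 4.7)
The plan is to verify the three hypotheses of \Thm{MQthm} for this $Q$, with a suitably chosen $\mu > 0$, and then invoke that theorem directly.

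First I would pick the parameter. Given $(y_1,q_1),(y_2,q_2) \in G(Q)$, write $y := y_1 - y_2$, so that $q_1 - q_2 = Qy$ and $\|(y_1-y_2,q_1-q_2)\|^2 = \|y\|^2 + \|Qy\|^2 \le (1 + \|Q\|^2)\|y\|^2$. Combining this with the standing hypothesis \eqref{le} applied to $y$, any $\mu \in (0,\lambda/(1 + \|Q\|^2)]$ gives
\begin{equation*}
\bra{y_1-y_2}{q_1-q_2} + \mu\|(y_1-y_2,q_1-q_2)\|^2
\le \bra{y}{Qy} + \lambda\|y\|^2 \le 0,
\end{equation*}
so $Q$ is $\mu$--unmonotone in the sense of \Def{unmonotone}.

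Next I would check the remaining two hypotheses of \Thm{MQthm}. The domain condition $D(Q) = Y$ is immediate because $Q$ is a bounded linear operator. For maximal monotonicity of $-Q - \mu \I_Y$, \Lem{C-} already gives monotonicity; since $-Q - \mu \I_Y$ is continuous and single-valued (being a bounded linear operator on $Y$), \Fact{single} promotes this to maximal monotonicity.

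With all three hypotheses of \Thm{MQthm} verified, that theorem immediately yields that $Q$ touches every maximally monotone multifunction on $Y$, which is the conclusion of \Cor{Q}. There is no real obstacle here; the only mildly delicate point is selecting $\mu$ small enough so that the full graph norm $\|(y_1-y_2,q_1-q_2)\|^2$, not just $\|y_1-y_2\|^2$, can be absorbed by the $\lambda\|y\|^2$ term from \eqref{le}, and this is handled by the factor $1 + \|Q\|^2$.
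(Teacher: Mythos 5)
Your proposal is correct and follows essentially the same route as the paper's proof: choose $\mu = \lambda/(1+\|Q\|^2)$ (the paper fixes this value, you allow anything in $(0,\lambda/(1+\|Q\|^2)]$), verify $\mu$--unmonotonicity via the bound $\|(y,Qy)\|^2 \le (1+\|Q\|^2)\|y\|^2$, use \Lem{C-} and \Fact{single} for maximal monotonicity of $-Q-\mu\I_Y$, and conclude with \Thm{MQthm}. Your explicit check that $D(Q)=Y$ is a small point the paper leaves implicit, but the argument is the same.
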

\begin{proof}
Let $\mu := \lambda/(1 + \|Q\|^2)$.  Let $(y_1,q_1),(y_2,q_2) \in G(Q)$.   Then $q_1 = Qy_1$ and $q_2 = Qy_1$.   Consequently,  
\begin{align*}
\bra{y_1 - y_2}{q_1-q_2} &+ \mu\|(y_1 - y_2,q_1 - q_2)\|^2\\
&= \Bra{y_1 - y_2}{Q(y_1-y_2)} +\mu\|(y_1 - y_2,Q(y_1 - y_2)\|^2\\
&\le \Bra{y_1 - y_2}{Q(y_1-y_2)} + \mu(1 + \|Q\|^2)\|y_1 - y_2\|^2\\
&= \Bra{y_1 - y_2}{Q(y_1-y_2)} + \lambda\|y_1 - y_2\|^2 \le 0. 
\end{align*}
Thus $Q$ is $\mu$--unmonotone.   From \Lem{C-} and \Fact{single}, $-Q - \mu\I_Y$ is maximally monotone.  The result now follows from \Thm{MQthm}.
\end{proof}
%
%\Lem{fpt}
\begin{lemma}\label{fpt}
Let $T \in \B(Y)$ be surjective and bijective,  $\lambda > 0$ and
%\eqref{S}
\begin{equation}\label{S}
\all\ x \in Y,\ \bra{x}{Tx} + \lambda\|Tx\|^2 \le 0.
\end{equation}
Then, whenever $M$ is a maximally monotone multifunction on $Y$, the\break multifunction $MT$ has a unique fixed point.
\end{lemma}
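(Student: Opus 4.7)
The plan is to reduce the statement directly to \Cor{Q} via the substitution $y = Tx$. Since $T \in \B(Y)$ is bijective, the open mapping theorem gives a bounded inverse $T^{-1} \in \B(Y)$. Set $Q := T^{-1}$. Then a fixed point of $MT$ is a point $x \in Y$ with $x \in M(Tx)$, and writing $y := Tx$ (so $x = Qy$) this condition becomes $Qy \in My$, i.e., $(y,Qy) \in G(M) \cap G(Q)$. So fixed points of $MT$ are in bijective correspondence with $G(M) \cap G(Q)$ via $x \leftrightarrow (Tx,x)$.

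The next step is to verify that $Q$ satisfies the hypothesis \eqref{le} of \Cor{Q} with the same $\lambda$. Substituting $x = Qy$ in \eqref{S} gives $\bra{Qy}{TQy} + \lambda\|TQy\|^2 \le 0$, and since $TQ = \I_Y$ this is exactly
\begin{equation*}
\bra{y}{Qy} + \lambda\|y\|^2 \le 0 \quad \text{for all } y \in Y.
\end{equation*}

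By \Cor{Q}, $Q$ touches every maximally monotone multifunction on $Y$. Hence, for any maximally monotone $M$, the set $G(M) \cap G(Q)$ is a singleton; under the bijection above, this yields a unique fixed point of $MT$.

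I do not expect any real obstacle here: the only subtlety is recognizing that the hypothesis on $T$ is designed to make $T^{-1}$ satisfy the hypothesis on $Q$ in \Cor{Q}, and that the fixed point equation $x \in MTx$ is just the graph-intersection condition $(Tx,x) \in G(M) \cap G(Q)$ rewritten. The boundedness of $T^{-1}$ (needed so that $Q \in \B(Y)$) is automatic from the open mapping theorem.
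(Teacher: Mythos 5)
Your proof is correct and is essentially identical to the paper's own argument: both define $Q := T^{-1}$ (bounded by the open mapping theorem), observe that the substitution $y = Tx$ turns \eqref{S} into hypothesis \eqref{le} of \Cor{Q}, and then identify fixed points of $MT$ with points of the singleton $G(M) \cap G(Q)$. The only presentational difference is that you package the last step as an explicit bijective correspondence $x \leftrightarrow (Tx,x)$, whereas the paper checks uniqueness and existence separately.
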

\begin{proof}
From the open mapping theorem, there exists $Q \in \B(Y)$ such that
%\eqref{SUQ}
\begin{equation}\label{SUQ}
\all\ y \in Y,\quad y = T(Qy)\quand y = Q(Ty).
\end{equation}
Let $M\colon\ Y \toto Y$ be maximally monotone. 
If $y \in Y$, let $x := Qy \in Y$.  From \eqref{SUQ}, $y = T(Qy) = Tx$.   Thus, from \eqref{S}, $\bra{y}{Qy} + \lambda\|y\|^2 = \bra{Tx}{x} + \lambda\|Tx\|^2 \le 0$, and \Cor{Q} implies that $M$ and $Q$ touch.   Let $G(M) \cap G(Q) = \{(d,e)\} \subset Y \times Y$.
\par
If $y$ is a fixed point of $MT$ then $y \in M(Ty)$ and so $(Ty,y) \in G(M)$.   From \eqref{SUQ} again, $(Ty,y) \in G(Q)$, and so $(Ty,y) = (d,e)$, from which $y = e$.   Thus $e$ is the only possibility as a fixed point of $MT$.
\par
On the other hand, since $(d,e) \in G(M) \cap G(Q)$, $e = Qd$ and $e \in Md$.
From \eqref{SUQ}, $d = T(Qd) = Te$, and so $e \in M(Te) = (MT)e$.   Thus $e$ is, in fact, a fixed point of $MT$.
\end{proof}
The final result of this section, \Cor{BW3.1}, is a restatement of {\cite[Lemma 3.1]{BW}} which, in turn, generalizes {\cite[Lemma 16]{mth}}.   We will use \Fact{SUBDIFF} below, see\break \cite[Theorem 20.40, p.\ 304]{BC} and \cite[Theorem 21.2, p.\ 312]{BC} for proofs.
\par
If $H$ is a real Hilbert space, we write $\Gamma_0(H)$ for the set of all {\em proper, convex, lower semicontinuous} functions from $H$ into $\rbar$.
% \Fact{SUBDIFF}
\begin{fact}\label{SUBDIFF}
{\em Let $g \in \Gamma_0(Y)$.  Then the subdifferential of $g$, $\partial g$, is maximally\break monotone.}
\end{fact}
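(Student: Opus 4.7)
The plan is to establish maximal monotonicity of $\partial g$ by the classical two-step route: verify monotonicity directly, show that $\I_Y + \partial g$ is surjective, and then upgrade this to maximality through the standard Minty duality argument.

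Monotonicity will be immediate from the defining subgradient inequality. If $y_i \in \partial g(x_i)$ for $i = 1,2$, then adding $g(x_2) \ge g(x_1) + \bra{x_2 - x_1}{y_1}$ and $g(x_1) \ge g(x_2) + \bra{x_1 - x_2}{y_2}$ yields $\bra{x_1 - x_2}{y_1 - y_2} \ge 0$.

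The heart of the proof is the surjectivity of $\I_Y + \partial g$. For any $y \in Y$, I would introduce the Moreau regularized functional $h := g + \tfrac{1}{2}\|\cdot - y\|^2$. Since $g \in \Gamma_0(Y)$, it is minorized by a continuous affine function (via Hahn--Banach separation of $\epi g$ from a point strictly below it), so $h$ is proper, convex, lower semicontinuous, and coercive on $Y$. A standard weak-compactness argument---bound a minimizing sequence using coercivity, pass to a weakly convergent subsequence, and invoke weak lower semicontinuity of $h$ (which follows from strong lower semicontinuity plus convexity)---will produce a minimizer $x^* \in Y$. Fermat's rule together with the Moreau--Rockafellar subdifferential sum rule, applicable because $\tfrac{1}{2}\|\cdot - y\|^2$ is finite and continuous everywhere on $Y$, then gives $0 \in \partial g(x^*) + (x^* - y)$, that is, $y \in x^* + \partial g(x^*)$.

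To finish with maximality, I would run the Minty argument. Suppose $(x_0, y_0) \in Y \times Y$ satisfies $\bra{x - x_0}{y - y_0} \ge 0$ for every $(x,y) \in G(\partial g)$. Using the surjectivity just established, pick $x_1 \in Y$ and $y_1 \in \partial g(x_1)$ with $x_0 + y_0 = x_1 + y_1$, so that $y_1 - y_0 = -(x_1 - x_0)$. The monotone-relation hypothesis applied at $(x_1, y_1)$ then forces $-\|x_1 - x_0\|^2 \ge 0$, hence $x_1 = x_0$ and therefore $y_0 = y_1 \in \partial g(x_0)$, yielding $(x_0,y_0) \in G(\partial g)$. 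The main technical obstacle is the surjectivity step: one must both assemble the coercivity estimate from the affine minorant of $g$ and justify the Moreau--Rockafellar sum rule for $\partial\bigl(g + \tfrac{1}{2}\|\cdot - y\|^2\bigr)$, and it is precisely the full-domain continuity of the quadratic term that makes the sum rule apply without any regularity assumption on $\dom g$.
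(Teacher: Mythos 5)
Your proof is correct. The paper does not actually prove this statement --- it is quoted as a Fact with citations to \cite{BC} --- and your argument (monotonicity from the two subgradient inequalities, surjectivity of $\I_Y + \partial g$ by minimizing the Moreau-regularized functional $g + \tfrac{1}{2}\|\cdot - y\|^2$, then the Minty-type implication from surjectivity to maximality) is exactly the classical Hilbert-space proof that the cited reference follows. The only simplification available is in the surjectivity step: since $h$ is strongly convex, a minimizing sequence is already Cauchy in norm by the parallelogram identity, so one can obtain the minimizer without invoking weak compactness and weak lower semicontinuity; but your route is equally valid.
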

\par
The three quantities $X$, $Y$ and $Q$ used in \Cor{BW3.1} were defined in terms of three other quantities $R$, $M$ and $S$ in \cite[Section 2]{BW}.   The statement of\break \Cor{BW3.1} shows that this is unnecessary.   We also note that \eqref{eq} uses an equality, while \eqref{le} uses an inequality, which suffices for \Cor{Q}.
\par
We write $\B(X,Y)$ for the set of all bounded linear operators from $X$ into $Y$.  
%\Cor{BW3.1}
  
%\Cor{BW3.1}
\begin{corollary}\label{BW3.1}
Let $Y$ be a closed subspace of a Hilbert space $X$, $Q \in \B(X,Y)$,   
%\eqref{eq}
\begin{equation}\label{eq}
\all\ y \in Y,\quad \bra{y}{Qy} + \half\|y\|^2 = 0,
\end{equation}
$f \in \Gamma_0(X)$ and $f^*|_Y \in \Gamma_0(Y)$.   Then there exists $(d,e) \in Y \times Y$ such that
\begin{equation*}
G(\partial(f^*|_Y)) \cap G(Q)  = \{(d,e)\}.
\end{equation*}
\end{corollary}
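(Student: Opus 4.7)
The plan is to reduce \Cor{BW3.1} directly to \Cor{Q} applied to the restriction of $Q$ to $Y$, using \Fact{SUBDIFF} to produce the maximally monotone multifunction that $Q$ must touch.

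First I would observe that since $Q \in \B(X,Y)$ and $Y$ is a closed subspace of $X$, the restriction $\wt Q := Q|_Y$ lies in $\B(Y)$ (with range in $Y$). The hypothesis \eqref{eq} then gives, for all $y \in Y$, $\bra{y}{\wt Q y} + \half\|y\|^2 = 0$, which in particular satisfies the inequality \eqref{le} with $\lambda = \half$. Hence \Cor{Q} applies to $\wt Q$: $\wt Q$ touches every maximally monotone multifunction on $Y$.

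Next, since $f^*|_Y \in \Gamma_0(Y)$ by hypothesis, \Fact{SUBDIFF} tells us that $\partial(f^*|_Y)\colon Y \toto Y$ is maximally monotone. Applying the conclusion of \Cor{Q} to $M = \partial(f^*|_Y)$, we get that $G(\partial(f^*|_Y)) \cap G(\wt Q)$ is a singleton in $Y \times Y$, say $\{(d,e)\}$.

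Finally I would need to justify replacing $G(\wt Q)$ by $G(Q)$ in the intersection. Since $\partial(f^*|_Y)$ is defined on $Y$, we have $G(\partial(f^*|_Y)) \subseteq Y \times Y$; and since $\wt Q = Q|_Y$, we have $G(\wt Q) = G(Q) \cap (Y \times Y)$. Intersecting with the subset $G(\partial(f^*|_Y))$ of $Y \times Y$ therefore gives $G(\partial(f^*|_Y)) \cap G(Q) = G(\partial(f^*|_Y)) \cap G(\wt Q) = \{(d,e)\}$, which is the desired conclusion. The proof is essentially immediate once the machinery of \Cor{Q} and \Fact{SUBDIFF} is in place, so there is no real obstacle — the only point that requires a brief comment is the distinction between $Q$ (defined on $X$) and its restriction $\wt Q$ (defined on $Y$), since \Cor{Q} is stated for operators in $\B(Y)$.
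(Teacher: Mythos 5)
Your proposal is correct and follows essentially the same route as the paper, which likewise deduces the result from \Fact{SUBDIFF}, \Cor{Q} and \Def{touching} applied to the maximally monotone multifunction $\partial(f^*|_Y)$. Your extra remark on passing from $Q \in \B(X,Y)$ to its restriction $Q|_Y \in \B(Y)$ (and noting that the equality \eqref{eq} implies the inequality \eqref{le} with $\lambda = \half$) is a point the paper leaves implicit, and it is handled correctly.
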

\begin{proof}
This follows from \Fact{SUBDIFF}, \Cor{Q} and \Def{touching}, with the\break maximally monotone multifunction $\partial(f^*|_Y)$.  
\end{proof}
\begin{remark}
In \cite[Remark 3.6]{BW}, $e$  is called the {\em generalized cycle} of $f$, and $d$ is called the {\em generalized gap vector}  of $f$.
\par
A comparison of the statements of \Cor{Q} and \Cor{BW3.1} shows that $X$ does not play a fundamental role in \Cor{BW3.1}.
\par
On the other hand, $X$ {\em does} play a fundamental role in the results of the next section.
\end{remark}
% \Sec{YX}
\section{Hilbert subspaces}\label{YX}
\Thm{BWTh4.10}(b) is a restatement of {\cite[Theorem 4.10]{BW}} which, in turn, generalizes {\cite[Lemma 16]{mth}}.  \Thm{BWTh4.10}(b) follows easily from \Thm{BWTh4.10}(a). We note that \Thm{BWTh4.10}(a), does not require $f$ to attain a minimum on $X$, whereas \Thm{BWTh4.10}(b) does. 
%\Thm{BWTh4.10}
\begin{theorem}\label{BWTh4.10}
Let $Y$ be a closed subspace of a Hilbert space $X$, $S \in \B(X,Y)$, $S|_Y \in \B(Y)$ be surjective and injective and,
\begin{equation}\label{BW4.10.1}
\all\ x \in X,\ \bra{x}{Sx} + \half\|Sx\|^2 = 0.
\end{equation}
Let $f \in \Gamma_0(X)$ and $f^*|_Y \in \Gamma_0(Y)$.   Then it follows that the multifunction $[\partial(f^*|_Y)] \circ S|_Y\colon\ Y \toto Y$ has a unique fixed point, $e$, and
%\eqref{qlr}
\begin{equation}\label{qlr}
x \in X \dand Sx \in \partial f(x) \qlr Sx = Se.
\end{equation}
Furthermore:
\par
\noindent
{\rm (a)}\enspace  If $x \in X$, $f(x) \in \RR$ and $f(x) \le (f^*|_Y)^*(e)$ then
\begin{equation}\label{BW4.10.2}
Sx \in \partial f(x) \iff f^*(Sx) + \half\|Sx\|^2 + f(x) = 0\iff Sx = Se.
\end{equation} 
\par
\noindent
{\rm (b)}\enspace If $x \in X$ and $f(x) = \min_Xf$ then \eqref{BW4.10.2} holds.  
\end{theorem}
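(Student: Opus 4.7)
My plan is to obtain the unique fixed point $e$ directly from \Lem{fpt}, applied to $T := S|_Y$ (which is bijective by hypothesis and, by restricting \eqref{BW4.10.1} to $y \in Y$, satisfies $\bra{y}{Ty} + \half\|Ty\|^2 = 0$) with $\lambda = \half$, together with $M := \partial(f^*|_Y)$, which is maximally monotone by \Fact{SUBDIFF}. The substantive work is \eqref{qlr}; parts (a) and (b) will then follow by careful bookkeeping with Fenchel--Young.

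The key structural observation I need first is that $S$ vanishes on $Y^\perp$. Polarizing \eqref{BW4.10.1} by substituting $x = x_1 + x_2$ and cancelling the diagonal identities yields
\[
\bra{x_1}{Sx_2} + \bra{x_2}{Sx_1} + \bra{Sx_1}{Sx_2} = 0 \qquad (x_1, x_2 \in X).
\]
For $x_1, x_2 \in Y^\perp$ the cross terms $\bra{x_i}{Sx_j}$ vanish because $Sx_i \in Y$, leaving $\bra{Sx_1}{Sx_2} = 0$; setting $x_1 = x_2$ gives $Sx = 0$ on $Y^\perp$. Consequently $Sx = S(P_Y x)$ for every $x \in X$, so the element $u := (S|_Y)^{-1}(Sx) \in Y$ is simply $P_Y x$, and in particular $x - u \in Y^\perp$.

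To prove \eqref{qlr}, suppose $Sx \in \partial f(x)$. The Fenchel--Young equality combined with \eqref{BW4.10.1} gives $f^*(Sx) = -\half\|Sx\|^2 - f(x)$. For any $z \in Y$, the Fenchel--Young inequality produces $f^*(z) \ge \bra{x}{z} - f(x) = f^*(Sx) + \bra{x}{z - Sx}$, and since $z - Sx \in Y$ while $x - u \in Y^\perp$ we have $\bra{x}{z - Sx} = \bra{u}{z - Sx}$. This is precisely the subgradient inequality $u \in \partial(f^*|_Y)(Sx) = \partial(f^*|_Y)(Su)$, so $u$ is a fixed point of $\partial(f^*|_Y) \circ S|_Y$. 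By the uniqueness in \Lem{fpt}, $u = e$ and $Sx = Su = Se$.

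For (a), the first equivalence in \eqref{BW4.10.2} is Fenchel--Young combined with $\bra{x}{Sx} = -\half\|Sx\|^2$, and $Sx \in \partial f(x) \Rightarrow Sx = Se$ is \eqref{qlr}. To close the cycle, applying Fenchel--Young to the fixed-point identity $e \in \partial(f^*|_Y)(Se)$ yields $f^*(Se) + \half\|Se\|^2 + (f^*|_Y)^*(e) = 0$; then $Sx = Se$ together with the hypothesis $f(x) \le (f^*|_Y)^*(e)$ gives $f^*(Sx) + \half\|Sx\|^2 + f(x) \le 0$, while Fenchel--Young for $f$ supplies the reverse inequality, so equality holds. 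For (b), the hypothesis of (a) follows by evaluating the sup defining $(f^*|_Y)^*(e)$ at $z = 0 \in Y$: $(f^*|_Y)^*(e) \ge -f^*(0) = \inf_X f = f(x)$. The main obstacle I anticipate is the polarization step giving $S|_{Y^\perp} = 0$; it is what connects an arbitrary $x \in X$ to the $Y$-framework of \Lem{fpt} by providing the orthogonality $x - u \in Y^\perp$ that converts the Fenchel--Young inequality for $f$ into the subgradient inequality for $f^*|_Y$.
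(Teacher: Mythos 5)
Your proof is correct, and for the central implication \eqref{qlr} it takes a genuinely different route from the paper's. The paper never decomposes $x$: assuming $Sx \in \partial f(x)$, it adds four Fenchel--Young relations --- the equality $\bra{x}{Sx} = f(x) + f^*(Sx)$, the equality $\bra{e}{Se} = f^*(Se) + (f^*|_Y)^*(e)$ coming from the fixed point, and the two inequalities for the cross pairs $(x,Se)$ and $(e,Sx)$ --- to obtain $\bra{x-e}{S(x-e)} \ge 0$, and then \eqref{BW4.10.1} applied at $x-e$ forces $-\half\|S(x-e)\|^2 \ge 0$, i.e.\ $Sx = Se$. You instead polarize \eqref{BW4.10.1} to show $S$ vanishes on $Y^\perp$ (a correct and slightly delicate point: the cross terms $\bra{x_i}{Sx_j}$ vanish precisely because $S$ maps into $Y$), conclude $Sx = S(P_Yx)$ with $x - P_Yx \in Y^\perp$, and then convert the subgradient inequality for $f$ at $x$ into the subgradient inequality exhibiting $P_Yx \in \partial(f^*|_Y)(S(P_Yx))$, so that $P_Yx$ is itself a fixed point of $[\partial(f^*|_Y)]\circ S|_Y$ and the uniqueness in \Lem{fpt} gives $P_Yx = e$. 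Your argument buys more structural information --- it identifies the fixed point as $P_Yx$ and makes explicit that $X$ enters only through the projection onto $Y$ --- at the cost of the extra polarization step; the paper's argument is shorter, never needs $S|_{Y^\perp} = 0$, and is pure Fenchel--Young bookkeeping. Your treatment of (a) and (b) coincides with the paper's.
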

\begin{proof}
From \Fact{SUBDIFF}, $\partial(f^*|_Y)$ is maximally monotone, and so \Lem{fpt} \big(with $\lambda:= \half$, $T:= S|_Y$ and $M:= \partial(f^*|_Y)$\big) implies that $[\partial(f^*|_Y)]\circ S|_Y$ has a unique fixed point, $e$. Since $e \in [\partial(f^*|_Y)](S|_Ye)$,\quad $f^*(Se) = f^*|_Y(Se) < \infty$, $(f^*|_Y)^*(e) < \infty$\quad and, 
%\eqref{psipsi1}
\begin{equation}\label{psipsi1}
\bra{e}{Se} = f^*(Se) + (f^*|_Y)^*(e).
\end{equation}
We now establish \eqref{qlr}. To this end, suppose that $x \in X$ and $Sx \in \partial f(x)$.   Then  $f(x) < \infty$, $f^*(Sx) < \infty$ and
\begin{equation}
\bra{x}{Sx} = f(x) + f^*(Sx).
\end{equation}
From the Fenchel--Young inequality:
%\eqref{psipsi2}
\begin{equation}\label{psipsi2}
- \bra{x}{Se} \ge  -f(x) - f^*(Se) \quand - \bra{e}{Sx} \ge  - (f^*|_Y)^*(e) - f^*(Sx).
\end{equation}
Now\quad $\bra{x - e}{S(x - e)} = \bra{e}{Se} + \bra{x}{Sx} - \bra{x}{Se} - \bra{e}{Sx}$.\quad Consequently, by adding \eqref{psipsi1}--\eqref{psipsi2}, we see that \quad$\bra{x - e}{S(x - e)} \ge 0$.\quad  From \eqref{BW4.10.1},
\begin{equation*}
-\half\|Sx - Se\|^2 = -\half\|S(x - e)\|^2 = \bra{x - e}{S(x - e)} \ge 0.
\end{equation*}
Thus $Sx = Se$, which completes the proof of \eqref{qlr}.
\par
(a)\enspace Suppose that $x \in X$, $f(x) \in \RR$, $f(x) \le (f^*|_Y)^*(e)$ and $Sx = Se$.  Then, using \eqref{BW4.10.1} twice and \eqref{psipsi1},
\begin{align*}
\bra{x}{Sx} &= - \half\|Sx\|^2 = - \half\|Se\|^2= \bra{e}{Se} =  f^*(Sx) + (f^*|_Y)^*(e)\\
&\ge f^*(Sx) + f(x).
\end{align*}
Consequently, $Sx \in \partial f(x)$,  and \eqref{BW4.10.2} follows from \eqref{BW4.10.1} and \eqref{qlr}.
\par
(b)\enspace We first note that $(f^*|_Y)^*(e) \ge \bra{0}{e} - (f^*|_Y)(0) = -f^*(0) = \inf_Xf$.   So if $x \in X$, and $f(x) = \min_Xf$ then \eqref{BW4.10.2} follows from (a).
\end{proof}
\begin{remark}
The significance of \eqref{BW4.10.1} is not only that it makes \Thm{BWTh4.10} possible.   Since\quad $\bra{x}{Sx} + \half\|Sx\|^2 = 0 \iff \|x\|^2 + 2\bra{x}{Sx} + \|Sx\|^2 = \|x\|^2$\quad and\quad $\|x\|^2 + 2\bra{x}{Sx} + \|Sx\|^2 = \|Sx + x\|^2$,\quad \eqref{BW4.10.1} is equivalent to the statement that the linear map $S + \I_X$ is an {\em isometry}.   Historically, this is backwards: in \cite{mth} and \cite{BW}, the parameters of the {\em geometry conjecture} provided us with an isometry $R$, and $S$ was {\em defined} by $S:= R - \I_X$.
\end{remark}

\end{document}